\newtheorem{theorem}{Theorem}
\newtheorem{corollary}{Corollary}
\title{Resolving the two envelope paradox}
\author{Nemo Semret\\
nemo@semret.org}
\date{January 10, 2021 \footnote{Revised January 28, 2021}}
\begin{document}
\maketitle
\begin{abstract}
Consider the following game: You are given two indistinguishable envelopes, each containing money. One contains twice as much as the other. You may pick one envelope and keep the money it contains. Having chosen an envelope, you are given the chance to switch envelopes. Should you switch? 

The intuitive answer is that it makes no difference, since you are equally likely to have picked the envelope with the higher or the lower amount. However, a naive expected value calculation implies you gain by switching, since you have $ 50\% $ chance of doubling and $ 50\% $ chance of halving your current winnings, and so if the first chosen envelope contains X, then switching gives an expected final value of $ (X/2 + 2X)/2 > X $.   That seems like  a paradox. 

We prove that the former is the correct answer, and show how the apparent "paradox" can be resolved. 
\end{abstract}

\section{Introduction}

The two envelope paradox is well known, and a number of approaches exist to explain it \cite{wiki}. Our approach is based on  probability theory. 

We formally define the problem, and prove that switching does not improve  expected value.  We then resolve the paradox by explaining where the "naive" calculation goes wrong. Finally we discuss how having some prior information changes the problem. 

\section{Expected value and optimal strategy}

Let Y be the base amount (i.e. the envelopes contain Y and 2Y),  it is a random variable with distribution \(F_1\), on a probability space \(\Omega\).  

Let X(Y) and X'(Y)  represent the amounts in the chosen envelope and the other envelope respectively.  To represent the choice of envelope, we have the distribution \(F_2\)  on a probability space with two events  \( \mathcal{E} =  \{\mathcal{E}_1, \mathcal{E}_2\} \).  X is a function \( X (Y(\omega), \epsilon ) \) where \( ( \omega, \epsilon) \in \Omega \times \mathcal{E} \).   When \( \epsilon =  \mathcal{E}_1  \), we happen to choose  the smaller envelope, so X = Y and X' = 2Y.  And when  \( \epsilon = \mathcal{E}_2 \),  X = 2Y and X' = Y. The choices are equally likely i.e. \( \mathbb{P}(\mathcal{E}_1) =  \int_{\mathcal{E}_1} dF_2 = 1/2\) and \(\mathbb{P}(\bar{\mathcal{E}_2}) = \int_{\bar{\mathcal{E}_2}} dF_2 = 1/2 \). 

Let V(Y) be the final value the player gets after deciding to switch or not. It has  distribution \(F_3\) on two events \( \{ \mathcal{S}, \bar{\mathcal{S}} \} \) representing respectively switching (V = X'), or not switching (V = X). For example if the strategy is to always switch, \( F_3 \) is deterministic with \( \mathbb{P}(\mathcal{S}) =  \int_{\mathcal{S}} dF_3 = 1, \mathbb{P}(\bar{\mathcal{S}}) = \int_{\bar{\mathcal{S}}} dF_3 = 0 \).  More generally, \(F_3\) is allowed to  depend on the observed value of X, as the player can decide to switch after seeing the value in the chosen envelope. But not on \(F_1 \) which the player has no information about.

Our key result is that the payoff is the same for all possible switching strategies. 

\begin{theorem}
\(E[V]  = \frac{3}{2} E[Y]\), for all possible switching strategies \(F_3 \). 
\end{theorem}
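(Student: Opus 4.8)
The plan is to reduce the statement to the vanishing of a single correction term and then argue that it vanishes. Assume $\mathbb{E}[Y]<\infty$ (if $\mathbb{E}[Y]=\infty$ both sides are $+\infty$, since $V\ge Y$ always). Let $s(x):=\mathbb{P}(\mathcal{S}\mid X=x)$ be the measurable switching function determined by $F_3$, so that $s$ is fixed without reference to $F_1$ and $V$ equals $X'$ with conditional probability $s(X)$ and $X$ otherwise. First I would compute the ``never switch'' payoff: conditioning on the envelope choice $\epsilon$, which is independent of $Y$ with $\mathbb{P}(\mathcal{E}_1)=\mathbb{P}(\mathcal{E}_2)=\tfrac12$, gives $\mathbb{E}[X\mid Y]=\tfrac12 Y+\tfrac12(2Y)=\tfrac32 Y$ and hence $\mathbb{E}[X]=\tfrac32\mathbb{E}[Y]$; the identical computation gives $\mathbb{E}[X']=\tfrac32\mathbb{E}[Y]$, and in fact $X$ and $X'$ have the same marginal law, the equal mixture of the laws of $Y$ and $2Y$. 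By the tower property,
\[
  \mathbb{E}[V] \;=\; \mathbb{E}\!\left[\,s(X)\,X' + (1-s(X))\,X\,\right] \;=\; \mathbb{E}[X] \;+\; \mathbb{E}\!\left[\,s(X)\,(X'-X)\,\right],
\]
so the theorem comes down to showing the correction term $\mathbb{E}[s(X)(X'-X)]$ is zero.

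For a strategy that does not look at $X$ — the case of the classical paradox, $s\equiv\sigma$ constant — this is immediate: $\mathbb{E}[V]=\sigma\,\mathbb{E}[X']+(1-\sigma)\,\mathbb{E}[X]=\tfrac32\mathbb{E}[Y]$, using only linearity and $\mathbb{E}[X]=\mathbb{E}[X']$. An equivalent route, which also organizes the general case, is the pointwise identity $V+\bar V=X+X'=3Y$, where $\bar V$ is the payoff of the complementary strategy $1-s$: on $\mathcal{S}$ one has $V=X'$ and $\bar V=X$, on $\bar{\mathcal{S}}$ one has $V=X$ and $\bar V=X'$, and in either case the two envelopes sum to $3Y$. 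Taking expectations gives $\mathbb{E}[V]+\mathbb{E}[\bar V]=3\,\mathbb{E}[Y]$, so the theorem is equivalent to $\mathbb{E}[V]=\mathbb{E}[\bar V]$; when $s$ ignores $X$ this holds because $V$ and $\bar V$ then have the same distribution, as $X\stackrel{d}{=}X'$.

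The substantive case, and the step I expect to be the main obstacle, is a genuinely $X$-dependent $s$. Splitting the correction term on $\epsilon$, using $X'-X=Y$ on $\mathcal{E}_1$ and $X'-X=-Y$ on $\mathcal{E}_2$ together with independence of $\epsilon$ and $Y$, reduces the entire theorem to the identity
\[
  \mathbb{E}\!\left[\,s(Y)\,Y\,\right] \;=\; \mathbb{E}\!\left[\,s(2Y)\,Y\,\right].
\]
This is exactly where the hypothesis that the player has \emph{no information about $F_1$} must enter in an essential way: for an arbitrarily fixed function $s$ and an arbitrarily fixed law $F_1$ the two sides need not agree, so no purely pointwise argument can close the gap. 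I would try to encode ``ignorance of $F_1$'' as an invariance of the evaluation under rescaling the unknown base amount (a scale symmetry of the family of admissible $F_1$), under which the two integrals are forced to coincide. Pinning down which formalization of the player's ignorance is both faithful to the problem and strong enough to yield this identity — without quietly re-importing knowledge of $F_1$ — is, I believe, the crux of the matter, and is presumably what the resolution of the ``paradox'' in the next section turns on.
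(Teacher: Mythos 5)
Your reduction is correct, and the part of it you can complete is, in substance, the paper's own proof. The paper conditions on $Y$, uses $\mathbb{P}(\mathcal{E}_1)=\mathbb{P}(\mathcal{E}_2)=\tfrac12$, and factors $dF_3$ out of the $dF_2$ integrals ``since we don't know which envelope was chosen,'' obtaining $E[V\,|\,Y]=\tfrac32 Y$; that factorization is exactly your observation that $V+\bar V=X+X'=3Y$ combined with $X\stackrel{d}{=}X'$, and it is legitimate precisely when $s$ does not depend on $X$. The identity you isolate for the general case, $\mathbb{E}[s(Y)Y]=\mathbb{E}[s(2Y)Y]$, is exactly what the paper's independence step silently asserts: once $s$ depends on $X=X(Y,\epsilon)$, the event $\mathcal{S}$ is, conditionally on $Y$, a function of $\epsilon$, so $F_3$ and $F_2$ are \emph{not} independent and the integrands cannot be pulled out. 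So you have correctly located the one step that needs justification, and you are right that it is the crux.

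Where I must push back is on your hope that some formalization of ``ignorance of $F_1$'' closes the gap. It cannot, because the identity is genuinely false for strategies that are specified with no reference to $F_1$ at all. Take $s(x)=e^{-x}$. Then your correction term is
\[
\tfrac12\left(\mathbb{E}[s(Y)Y]-\mathbb{E}[s(2Y)Y]\right)=\tfrac12\,\mathbb{E}\!\left[\left(e^{-Y}-e^{-2Y}\right)Y\right]>0
\]
for \emph{every} distribution $F_1$ supported on positive amounts, so $E[V]>\tfrac32 E[Y]$ uniformly over the unknown prior; this is the decreasing-probability (randomized threshold) switching phenomenon of the McDonnell--Abbott reference that the paper itself cites in its final section. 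A scale-invariance argument cannot rescue the claim either, since $s(x)=e^{-x}$ is a single admissible strategy and the inequality holds pointwise in $F_1$, not merely on average over some family. The correct conclusion from your analysis is therefore not that a cleverer invariance principle is needed, but that the theorem is true only under the restriction your first two paragraphs cover --- $F_3$ not depending on the observed value $X$ (or $s$ constant on the support of $X$) --- and is false in the generality claimed by the paper, whose proof errs at exactly the independence step you flagged. State the restriction as a hypothesis and your proof of the restricted statement is complete and cleaner than the paper's.
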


\begin{proof}
The expected value of V over all probability measures \( F_1 \), \( F_2  \) and  \( F_3 \) is: 
\begin{equation}
\label{ev}
E[V] = \int_{\Omega} E[V|Y] dF_1
\end{equation}
For a given Y, we have the choice of switching or not switching i.e.
\begin{align*}
E[V|Y]  = \int V(Y) dF_3  = \int_{\bar{\mathcal{S}}} X(Y)dF_3  + \int_{\mathcal{S}} X'(Y)  dF_3
\end{align*}
and expanding the terms to show the choice of envelope \(\mathcal{E}_1\) and \(\mathcal{E}_2\):
\begin{align*}
E[V|Y] & = \int_{\bar{\mathcal{S}}}  \left( \int_{\mathcal{E}_1} Y dF_2 + \int_{\mathcal{E}_2} 2Y dF_2 \right) dF_3 + \int_{\mathcal{S}}  \left( \int_{\mathcal{E}_1} 2Y dF_2 + \int_{\mathcal{E}_2} Y dF_2 \right) dF_3 \\
 &  = \int_{\mathcal{E}_1} \left( \int_{\bar{\mathcal{S}}} Y dF_3 + \int_{\mathcal{S}} 2Y dF_3 \right)  dF_2 + \int_{\mathcal{E}_2} \left( \int_{\bar{\mathcal{S}}} 2Y dF_3 + \int_{\mathcal{S}} Y dF_3 \right)  dF_2 
\end{align*}
Now,  since we don't know which envelope was chosen  (i.e. whether we are  in \(\mathcal{E}_1  \) or \( \mathcal{E}_2 \)) when we make the choice to switch or not (being  in \( \mathcal{S}\) or \( \bar{\mathcal{S}} \)), \( F_3 \)  is independent of \(F_2\), so we can take out the integrands:
\begin{equation}
\label{indep}
E[V|Y] =  \left( \int_{\bar{\mathcal{S}}} Y dF_3 + \int_{\mathcal{S}} 2Y dF_3 \right) \int_{\mathcal{E}_1}  dF_2 +  
\left( \int_{\bar{\mathcal{S}}} 2Y dF_3 + \int_{\mathcal{S}} Y dF_3 \right) \int_{\mathcal{E}_2} dF_2
\end{equation}
Thus, plugging in the actual values for \( F_2 \)
\begin{align*}
E[V|Y]  & =  \left( \int_{\bar{\mathcal{S}}} Y dF_3 + \int_{\mathcal{S}} 2Y dF_3 \right) \frac{1}{2}  +   \left( \int_{\bar{\mathcal{S}}} 2Y dF_3 + \int_{\mathcal{S}} Y dF_3 \right) \frac{1}{2} \\
 & = \frac{1}{2}  \left( \int_{\bar{\mathcal{S}}}\left( Y+2Y \right) dF_3  + \int_{\mathcal{S}} \left( 2Y+Y \right) dF_3 \right) \\
 & = \frac{3}{2}  Y \left( \int_{\bar{\mathcal{S}}} dF_3  + \int_{\mathcal{S}}   dF_3 \right)  = \frac{3}{2}  Y 
\end{align*}
Now plugging this back into \eqref{ev}, the overall expectation of V is 
\begin{equation}
\label{th1}
E[V]  = \frac{3}{2} \int_{\Omega}  Y   dF_1 =  \frac{3}{2} E[Y] 
\end{equation}
\end{proof}

Thus the expected value is always the same, regardless of the switching strategy \(F_3\), including never switching, and seeing X or not seeing X makes no difference.
Now, we can see there is no paradox. 

\section{Resolving the paradox}

\begin{corollary} The expected  value from switching to the second envelope is the same as the expected value of keeping the first, i.e. E[V] = E[X].
\end{corollary}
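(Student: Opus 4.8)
The plan is to derive the corollary as an immediate specialization of Theorem 1, by observing that ``never switching'' is itself one of the admissible switching strategies. Concretely, I would take $F_3$ to be the deterministic measure with $\mathbb{P}(\bar{\mathcal{S}}) = 1$ and $\mathbb{P}(\mathcal{S}) = 0$ — which the problem setup explicitly lists as an allowed $F_3$. Under this choice $V = X$ identically, hence $E[V] = E[X]$, while Theorem 1 applied to this same $F_3$ gives $E[X] = \frac{3}{2} E[Y]$. Since Theorem 1 also gives $E[V] = \frac{3}{2} E[Y]$ for whatever strategy the player actually uses, the two expectations coincide.

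As an alternative route that avoids mentioning $F_3$ altogether, I would compute $E[X]$ directly by conditioning on $Y$ and expanding over the envelope choice, exactly as in the proof of Theorem 1:
\begin{align*}
E[X \mid Y] = \int_{\mathcal{E}_1} Y\, dF_2 + \int_{\mathcal{E}_2} 2Y\, dF_2 = \tfrac{1}{2}\, Y + \tfrac{1}{2}\,(2Y) = \tfrac{3}{2}\, Y,
\end{align*}
using $\mathbb{P}(\mathcal{E}_1) = \mathbb{P}(\mathcal{E}_2) = \tfrac{1}{2}$. Integrating this against $F_1$ as in \eqref{ev} yields $E[X] = \frac{3}{2}\, E[Y]$, and comparison with \eqref{th1} closes the argument.

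There is no substantive obstacle: the corollary is just Theorem 1 read for the two strategies ``keep the first envelope'' and ``do whatever the player does.'' The only point deserving a word of care is the legitimacy of invoking Theorem 1 for the degenerate never-switch measure — but the theorem was proved for \emph{all} $F_3$, including deterministic ones, so no modification is needed. Everything else is the arithmetic already performed in the proof above.
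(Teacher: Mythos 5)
Your proposal is correct and your second route is essentially identical to the paper's own proof, which computes $E[X\mid Y] = \tfrac{1}{2}Y + \tfrac{1}{2}(2Y) = \tfrac{3}{2}Y$ and integrates over $F_1$ to conclude $E[X] = \tfrac{3}{2}E[Y] = E[V]$ via \eqref{th1}. Your first route (specializing Theorem 1 to the never-switch strategy) is a valid and slightly slicker repackaging of the same observation, so no substantive difference.
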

\begin{proof}
By definition of X,  
\begin{align*}
E[X|Y] =  \int_{\mathcal{E}_1} Y dF_2 + \int_{\mathcal{E}_2} 2Y dF_2 = \frac{1}{2} Y + \frac{1}{2} 2Y
\end{align*}
since \( \mathbb{P}(\mathcal{E}_1) = \mathbb{P}(\mathcal{E}_2) = 1/2 \).
Taking expectation over Y,  we get
\begin{equation}
\label{ex}
E[X] = \frac{3}{2} E[Y] 
\end{equation}
therefore from \eqref{th1},  \(E[X] =  E[V] \).
\end{proof}

The  "paradox" is that it may naively seem like \(E[V] >  E[X]\). To see how it arises and why it is incorrect, let's restate the E[V] calculation in terms of X: 
\begin{equation}
\label{evx}
E[V|X] = \int_{\bar{\mathcal{S}}} X dF_3 + \int_{\mathcal{S}}  \left( \int_{\mathcal{E}_1} 2X dF_2 + \int_{\mathcal{E}_2} \frac{1}{2}X dF_2 \right) dF_3 \\
\end{equation}
Naively treating X as a constant, since \( \mathbb{P}(\mathcal{E}_1) = \mathbb{P}(\mathcal{E}_2) = 1/2 \) it seems like   
\begin{equation}
\label{wrong}
E[V|X] \stackrel{?}{=}  \mathbb{P}(\bar{\mathcal{S}}) X + \mathbb{P}(\mathcal{S}) \frac{5}{4} X  
\end{equation}
which implies that  \(E[V|X] >  X\), i.e. any non-zero switching probability is a strict improvement. In fact always switching i.e. \( \mathbb{P}(\mathcal{S}) = 1 \) is optimal and gives a 25\% gain over never switching.  

The root of the apparent paradox is that  \eqref{wrong} is incorrect, because in \eqref{evx}, \(X \) is not actually a constant in  \( \mathcal{E} \). This is counter-intuitive because we can compute \( E[V|X] \)  after seeing the actual value of X, so X seems like it should be constant. But here we are evaluating on \( \mathcal{E} \),  the choice of envelope, which of course affects the value of X. More precisely \(X dF_2 \) is  \(X(Y, \epsilon) dF_2(\epsilon)\), i.e. it is not the same X in the two \(dF_2\)  integrals, since \( \epsilon \) is the variable being integrated on.  
The event \( \epsilon \) cannot by definition be treated as a constant when evaluating it's probabilities. 

But \( Y \) \emph{is} a constant in  \( \mathcal{E} \),  since the event \( \omega \in \Omega \) is determined.  So we can use the fact that  \( X(Y,  \mathcal{E}_1 ) = Y \), and \( X(Y,  \mathcal{E}_2)  = 2Y\). Also, as in \eqref{indep}, the choice \( F_3  \) is independent of \( F_2 \). Thus we get: 
\begin{align*}
E[V|X] & =  \mathbb{P}(\bar{\mathcal{S}}) X +  \mathbb{P}(\mathcal{S}) \left( \int_{\mathcal{E}_1}  2Y  dF_2 + \int_{\mathcal{E}_2}   
\frac{1}{2} 2Y dF_2 \right) \\
 & =  \mathbb{P}(\bar{\mathcal{S}}) X +  \mathbb{P}(\mathcal{S}) \left( \frac{1}{2}   2Y   + \frac{1}{2}\frac{1}{2} 2Y   \right) \\
& = \mathbb{P}(\bar{\mathcal{S}}) X +  \mathbb{P}(\mathcal{S}) \frac{3}{2} Y
\end{align*}
Now for the given X, we can take expectations over all values of Y, 
\begin{align*}
E[V|X] = E[E[V|X, Y]] & =  \mathbb{P}(\bar{\mathcal{S}}) X +  \mathbb{P}(\mathcal{S}) \frac{3}{2} E[Y]
\end{align*}
And, using \eqref{ex}, we see that: \( E[V] = \frac{3}{2} E[Y] \), i.e. there's no paradox.

A simple example to illustrate: Suppose we open the chosen envelope and see  X =\$100. Contrary to the naive estimate, we are not actually in a state where the other  envelope has an equal chance of containing \$50  or \$200.  Rather, we are in state where a hidden  Y has already been chosen and we are looking at Y or 2Y, with equal chance. To see this more clearly, imagine after the envelopes are filled, they are cloned into many instances of the game in parallel (not repeated!), and X is an average of the observed value. By ergodicity, the expected value in the one-shot game is the same as the average value in the  parallel games.   Since we expect to observe an average of X = 3Y/2, by seeing X = \$100, we "learn" that Y = 2X/3 = \$66.66... and the average value of switching or not switching remains \$100.

\section{Open vs closed envelope} 
Consider the variation of the problem based on  whether the player gets to see the value \(X \) or not before making the decision to switch. In all of the above, knowing the actual value of  of \(X \) does not change the optimal strategy. Thus in both the open and closed versions, the answer remains that switching makes no difference.  Indeed we don't know anything about \(F_1 \) so knowing one value doesn't help us decide if \( X \) is \(Y \) or \(2Y\). 

\section{Prior information on distribution} 
If we (the player) have some prior knowledge of \(F_1\), then, given X, we may know if it's more likely to be Y or 2Y, i.e. if we are in \(\mathcal{E}_1  \) or \( \mathcal{E}_2 \), which we can use in deciding to switch or not, i.e. \(F_3\) can be a function of \(\epsilon \). Therefore the step \eqref{indep} where we factor out \(dF_2 \) from \(dF_3 \) is no longer valid and it is no longer true that all strategies have the same \(E[V] \). In fact switching does sometimes lead to gains, and knowing the value of \(X \) before deciding makes a difference too.

For example, if we know  the average E[Y], then we would use the strategy: switch if and only if \( X < \frac{3}{2} E[Y]  \).   Another example is, if we know \( Y_{max} \) the largest possible value of Y, then when \( X > Y_{max}  \),  naturally we should never switch because we know for sure that we are in \(\mathcal{E}_2  \).  Similarly, if  we know \( Y_{min} \) the smallest  possible value of Y, then  when \( X < 2Y_{min}  \),  we should always  switch  because we know for sure that we are in \(\mathcal{E}_1  \). 

More broadly, if we know \( F_1 \), after seeing \( X \), we can estimate \( \mathbb{P}(\mathcal{E}_1 | X) \) and \( \mathbb{P}(\mathcal{E}_2 | X) \),  and choose a mixed strategy \(F_3\) whereby the probability of not switching, \( \mathbb{P}(\bar{\mathcal{S}}) \), is higher when \( \mathbb{P}(\mathcal{E}_2|X) \) is higher, and  \( \mathbb{P}(\mathcal{S}) \) is higher if \( \mathbb{P}(\mathcal{E}_1 |X) \) is higher. 

In general, the switching strategy can be optimized to take advantage of any prior information about \(F_1\). A few interesting cases are covered in \cite{mcdon}.
\section*{Acknowledgement}
I would like to thank Jacob Eliosoff for pointing me to this problem and for helpful discussions. 

\end{document}